\begin{document}

\title{Abstract Zip Data}
\author{Christopher Lang}

\hypersetup{
    pdftitle={Abstract Zip Data}, 
    pdfauthor={Christopher Lang},       
}

\maketitle
\section{Introduction}

An important tool in the analysis of Shimura varieties is the theory of \(G\)-zips. In \cite{pink2011algebraiczipdata} and \cite{Pink_2015}, a method was developed to compute the underlying topological space of the stack of \(G\)-zips. For displays we have a very similar formalism, so we want to generalize this method to a more abstract setting.

The idea in the construction is that the stack of \(G\)-zips of type \(\mu\) has a description as a quotient stack \([E\backslash G]\), which is complicated to understand, so we do a refinement process to split the problems into smaller, easier problems. To obtain this refinement process, we look at the following datum: The cocharacter \(\mu\) fixes two parabolic subgroups \(P\) and \(Q\), and we have 
\[E=\set{(x,y)\in P\times Q}{\phi(\bar x)=\bar y},\]
where \(\bar x\) is the projection in the Levi quotient. From \(E\) we have the two projections \(\pi_1\) and \(\pi_2\) to \(G\), and we call \((E,G,\pi_1,\pi_2)\) an abstract zip datum. The action of \(E\) on \(G\) in the quotient stack \([E\backslash G]\) is given by \(e.g=\pi_1(e)g\pi_2(e)\inv\). Given such a zip datum, one can refine it by replacing \(E\) and \(G\) by certain smaller groups, and restricting the two morphisms.
Then one shows that in order to understand the orbits of \([E\backslash G]\), it is enough to understand the orbits of the refinement plus some double quotient. This double quotient will look something like \(P\backslash G/Q\) for parabolics \(P\) and \(Q\), and this is understood by the Bruhat decomposition, as the subset \({}^IW^J\) of the Weyl group of \(G\). Now the idea is to do this refinement process over and over, until the process becomes stationary at \([E_n\backslash G_n]\) where we only have one orbit, and then we piece together the orbits of \([E\backslash G]\) from the double quotients. 

We want to generalize such a construction to arbitrary groups \(E\) and \(G\), as well as two group homomorphisms \(\tau,\sigma: E\to G\) between them, and an action
\(E\times G\to G,\; (e,g)\mapsto \tau(e)g\sigma(e)\inv\). 
\begin{introdefinition}[Definition \ref{def_zip_data}]
    A \emph{zip datum} is a quadruple \(\Zcal = (E,G,\tau,\sigma)\) consisting of two (abstract) groups \(E,G\) and two group homomorphisms \(\tau,\sigma:E\to G\). Given a zip datum, we can look at the conjugated and the refined zip datum
    \begin{alignat*}{3}
        \Zcal^x &= (E,G,\tau,{}^x\sigma),&& x\in G,\\
        \Zcal_1 &= (E_1, G_1,\tau,\sigma) \coloneqq (\sigma\inv(\tau(E)),\tau(E),\tau,\sigma).
    \end{alignat*}
\end{introdefinition}
Now, instead of looking at the equivalence relation given by the action above, we look at the following coarser equivalence relation. To define it, we need the group \(G_\infty^x\coloneqq\tau(E_\infty^x)\), where \(E_\infty^x\) is the larges subgroup satisfying \({}^x\sigma(E_\infty^x)\subseteq\tau(E_\infty^x)\). In many cases, \(G_\infty^x\) is the intersection of the \(G\)'s from the repeated refinements of the zip datum \(\Zcal^x\).
\begin{introdefinition}[Definition \ref{def_equiv_rel_zips}]
    Given a zip datum \(\Zcal = (E,G,\tau,\sigma)\), we define an equivalence relation on \(G\) by saying \(y\sim x\) if there are \(e\in E\) and \(g_x\in G_\infty^x\) with \(y=\tau(e)g_x x\sigma(e)\inv\).
\end{introdefinition}
One can show that in some cases, for example for \(G\)-zips, the two equivalence relations have the same orbits, but in general this equivalence relation is strictly coarser. But with this equivalence relation we can show the following 
\begin{introproposition}[Proposition \ref{prop_zip_refinement_one_step}]
    Given a zip datum \(\Zcal = (E,G,\tau,\sigma)\) and \(x\in G\). The map 
    \[G_1^x \to \tau(E)x\sigma(E),\quad y\mapsto yx\]
    maps \(\mathcal Z_1^x\)-equivalence classes in \(G_1^x\) bijectively to \(\mathcal Z\)-equivalence classes in \(\tau(E)x\sigma(E)\).
\end{introproposition}
Using this proposition one can again see that in order to understand the entire equivalence relation, one needs to understand it on the smaller zip datum \(\Zcal_1^x\) and the double quotient \(\tau(E)\backslash G/\sigma(E)\). If the refinement process becomes stationary, then one can use this principle to show that the equivalence classes of \(\Zcal\) can be fully understood with the double quotients of the repeatedly refined zip data. This can be made precise in the following way: A system of representatives of the double quotients of the refined and twisted zip data can be made into a rooted forest, where a node in the \(i\)-th generation is a representative of a double quotient at the refinement step \(i\), see definition \ref{def_forest_double_quotient}. If we denote the \(i\)-th generation of the forest by \(R_i\) and by \(R_{i+1}\to R_i\) the map that sends a node to its parent, then we get the description
\[G/\sim_\Zcal \,\cong \lim\big( \cdots \to R_{i+1}\to R_i\to \cdots \to R_0\big),\]
as long as \(R_{i+1}\to R_i\) is an isomorphism for \(i\gg0\), see theorem \ref{thm_forest_double_quotient}.

\section*{Acknowledgements}
I want to thank my PhD advisor Torsten Wedhorn, who has mentored me throughout my journey of learning algebraic geometry over the last four years. You have been a great supervisor and always gave me a nice intuition.

This project was funded by the Deutsche Forschungsgemeinschaft (DFG, German Research Foundation) TRR 326 Geometry and Arithmetic of Uniformized Structures, project number 444845124.
\newpage
\section{Abstract Zip Data}
\label{chapter_zip_data}

The stack of \(G\)-zips can be understood as a quotient stack \([E\backslash G]\), where the group \(E\) acts on \(G\) by left and right multiplication with two different projections, namely \(E\) consists of pairs \((p,q)\) coming from parabolic subgroups, and this acts on \(G\) by \((p,q).g=pgq\inv\), see \cite[3.6]{Pink_2015}. In \cite{pink2011algebraiczipdata}, the authors constructed a refinement process for this quotient stacks, which yields an explicit description of the underlying topological space, showing that it equals \({}^IW\). We now want to generalize this refinement process to the setting of two arbitrary groups \(E\) and \(G\), as well as two group homomorphisms \(\tau,\sigma: E\to G\) between them, and an action
\(E\times G\to G,\; (e,g)\mapsto \tau(e)g\sigma(e)\inv\).

\begin{definition}
\label{def_zip_data}
    A \emph{zip datum} is a quadruple \(\Zcal = (E,G,\tau,\sigma)\) consisting of two (abstract) groups \(E,G\) and two group homomorphisms \(\tau,\sigma:E\to G\). Given a zip datum, we can look at the conjugated and the refined zip datum
    \begin{alignat*}{3}
        \Zcal^x &= (E,G,\tau,{}^x\sigma),&& x\in G,\\
        \Zcal_1 &= (E_1, G_1,\tau,\sigma) \coloneqq (\sigma\inv(\tau(E)),\tau(E),\tau,\sigma),
        \intertext{where \({}^x\sigma(e)\coloneqq x\sigma(e)x\inv\). Furthermore, we set}
        \Zcal_{i} &= (E_i,G_i,\tau,\sigma) \coloneqq (\Zcal_{i-1})_1,&& i>1,\\
        \Zcal_i^x &= (E_i^x,G_i^x,\tau,{}^x\sigma) \coloneqq (\Zcal^x)_i,&& x\in G.
    \end{alignat*}
\end{definition}

\begin{remark}
    Note that \(\tau\) and \(\sigma\) restrict to maps \(E_i\to G_i\), and these are the maps meant in the zip datum \(\Zcal_i\). When computing \(E_i = \sigma\inv(G_{i-1})\), it does not matter whether we view \(\sigma\) as a map defined on \(E\) or on \(E_{i-1}\). For example, for \(E_2\) one sees this due to the equation
    \[E_2 = \sigma\inv(\tau(E_1))\subseteq \sigma\inv(\tau(E)) = E_1.\]
    In particular, the two sequences \(E_i\) and \(G_i\) are decreasing.
    Also note that 
    \begin{align*}
        (\Zcal^x)_1 &= \bigl(\sigma\inv({}^{x\inv}\tau(E)),\tau(E),\tau,{}^x\sigma\bigr)
        \shortintertext{is not the same as}
        (\Zcal_1)^x &= \bigl(\sigma\inv(\tau(E)),\tau(E),\tau,{}^x\sigma\bigr),
    \end{align*}
    as the first component differs by conjugation under \(\sigma(x)\inv\).
\end{remark}

\begin{definition}
    Given a zip datum \(\Zcal = (E,G,\tau,\sigma)\), we denote by \(E_\infty = E_\infty(\Zcal)\) the largest subgroup of \(E\) with \(\sigma(E_\infty)\subseteq\tau(E_\infty)\).
    We write 
    \begin{align*}
        G_\infty = G_\infty(\Zcal) &\coloneqq \tau(E_\infty),\\
        E_\infty^x &\coloneqq E_\infty(\Zcal^x),\\
        G_\infty^x &\coloneqq G_\infty(\Zcal^x),\quad x\in G.
    \end{align*}
\end{definition}

\begin{proposition}
\label{E_inf_invariant_refinement}
    The groups \(E_\infty\) and \(G_\infty\) are invariant under refinement, i.e.\ for a zip datum \(\Zcal = (E,G,\tau,\sigma)\) we have 
    \[E_\infty(\Zcal) = E_\infty(\Zcal_1)\quad\text{and}\quad G_\infty(\Zcal) = G_\infty(\Zcal_1).\]
    In particular, we have \(E_\infty\subseteq E_i\) for all \(i\ge0\).
\end{proposition}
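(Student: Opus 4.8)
The plan is to prove the equality $E_\infty(\Zcal) = E_\infty(\Zcal_1)$ by a double inclusion, exploiting the maximality characterization of $E_\infty$ in both zip data. The crucial input is the observation that $E_\infty(\Zcal)$ already lies inside $E_1$, so that it becomes a candidate for the maximal subgroup defining $E_\infty(\Zcal_1)$. Once the $E_\infty$ equality is in place, the statement for $G_\infty$ follows immediately by applying $\tau$, and the ``in particular'' clause follows by iterating the equality.

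First I would establish the key containment $E_\infty(\Zcal)\subseteq E_1$. By definition of $E_\infty = E_\infty(\Zcal)$ we have $\sigma(E_\infty)\subseteq\tau(E_\infty)\subseteq\tau(E) = G_1$, and therefore $E_\infty\subseteq\sigma\inv(\tau(E)) = E_1$. The point to emphasize here is that the defining condition $\sigma(H)\subseteq\tau(H)$ for a subgroup $H$ is intrinsic to $H$ and is insensitive to whether we regard $\sigma$ and $\tau$ as defined on $E$ or restricted to $E_1$, precisely because any $H$ in question satisfies $H\subseteq E_1\subseteq E$; this matches the remark following Definition \ref{def_zip_data}.

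With this in hand the double inclusion is formal. Since $E_\infty(\Zcal)$ is a subgroup of $E_1$ satisfying $\sigma(E_\infty(\Zcal))\subseteq\tau(E_\infty(\Zcal))$, and $E_\infty(\Zcal_1)$ is by definition the \emph{largest} subgroup of $E_1$ with this property, we obtain $E_\infty(\Zcal)\subseteq E_\infty(\Zcal_1)$. Conversely, $E_\infty(\Zcal_1)$ is a subgroup of $E_1\subseteq E$ satisfying the same condition, hence by maximality of $E_\infty(\Zcal)$ among subgroups of $E$ we get $E_\infty(\Zcal_1)\subseteq E_\infty(\Zcal)$. This yields $E_\infty(\Zcal) = E_\infty(\Zcal_1)$, and applying $\tau$ gives $G_\infty(\Zcal) = \tau(E_\infty(\Zcal)) = \tau(E_\infty(\Zcal_1)) = G_\infty(\Zcal_1)$. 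For the final assertion I would apply the equality just proved to $\Zcal_{i-1}$ in place of $\Zcal$, giving the chain $E_\infty(\Zcal) = E_\infty(\Zcal_1) = \cdots = E_\infty(\Zcal_i)$; since $E_\infty(\Zcal_i)$ is by definition a subgroup of $E_i$, this shows $E_\infty\subseteq E_i$ for every $i\ge 0$.

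I expect the only genuine subtlety to be the well-definedness of the phrase ``the largest subgroup'' underlying $E_\infty$, on which the maximality arguments rest. To secure this I would note that the property $\sigma(H)\subseteq\tau(H)$ is stable under passing to the subgroup generated by any family of subgroups enjoying it, because $\sigma$ and $\tau$ are homomorphisms and $\sigma\bigl(\langle H_j\rangle\bigr) = \langle\sigma(H_j)\rangle\subseteq\langle\tau(H_j)\rangle = \tau\bigl(\langle H_j\rangle\bigr)$; hence the subgroup generated by all such $H$ is itself the unique maximal one. Beyond this the argument is entirely elementary, so the burden of the proof is conceptual rather than computational.
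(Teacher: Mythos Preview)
Your proof is correct and follows essentially the same approach as the paper. The paper's version is more compressed: it observes in one line that \emph{any} subgroup \(F\subseteq E\) with \(\sigma(F)\subseteq\tau(F)\) automatically lies in \(E_1\), which makes the two families of candidate subgroups coincide and hence their maxima as well; your double-inclusion argument unpacks exactly this, and your additional remarks on the \(G_\infty\) equality, the iteration for \(E_\infty\subseteq E_i\), and the well-definedness of ``largest'' are all correct elaborations that the paper leaves implicit.
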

\begin{proof}
    It is enough to show that if we have a subgroup \(F\subseteq E\) with \(\sigma(F)\subseteq \tau(F)\), then we have \(F\subseteq E_1=\sigma\inv(\tau(E))\). But this follows from
    \(\sigma(F) \subseteq\tau(F) \subseteq\tau(E)\).
\end{proof}

\begin{example}
\label{display_GL2(W(R))}
    For a perfect field \(k\) in characteristic \(p\), take \(G=\GL_2(W(k))\) and \(E=\left\{\begin{pmatrix}
        * & * \\ p & *
    \end{pmatrix}\right\}\) with
    \begin{align*}
        \tau(e) &= e\\
        \sigma\begin{pmatrix}
            a & b \\ pc & d
        \end{pmatrix} &= \begin{pmatrix}
            p \\ & 1
        \end{pmatrix}\begin{pmatrix}
            \phi(a) & \phi(b) \\ p\phi(c) & \phi(d)
        \end{pmatrix}\begin{pmatrix}
            p\inv \\ & 1
        \end{pmatrix} = \begin{pmatrix}
            \phi(a) & p\phi(b) \\ \phi(c) & \phi(d)
        \end{pmatrix}
    \end{align*}
    Then \(G_i=\begin{pmatrix}
        * & * \\ p^i & *
    \end{pmatrix}\) and \(E_i=\begin{pmatrix}
        * & * \\ p^{i+1} & *
    \end{pmatrix}\) and \(G_\infty=E_\infty=\begin{pmatrix}
        * & * \\ 0 & *
    \end{pmatrix}\)

    Now let \(x=\begin{pmatrix}
        0 & 1 \\ 1 & 0
    \end{pmatrix}\). Then we have 
    \[{}^x\sigma\begin{pmatrix}
        a & b \\ pc & d
    \end{pmatrix} = \begin{pmatrix}
        \phi(d) & \phi(c) \\ p\phi(b) & \phi(a)
    \end{pmatrix}\]
    and hence \(G_i^x=E_i^x=\begin{pmatrix}
        * & * \\ p & *
    \end{pmatrix}\) for \(1\le i\le\infty\).

    In particular, we see that in these cases the group \(E_\infty\) is the intersection of the \(E_i\). But note that this does not need to be the case, as the following example shows.
\end{example}

\begin{example}
    Let us first work in the category of sets. Let \(E=[0,1]\), \(G=\RR_{\ge0}\) and
    \[\begin{tikzcd}
        {[0,1]} \rar[shift left, "\tau\,:\, x\,\mapsto\, (1+x)|\sin(1/x)|"] \rar[shift right, "\sigma\,:\, x\,\mapsto\, 1+2x"'] &[70pt] \RR_{\ge0}
    \end{tikzcd}\]
    and \(\tau(0)\coloneqq 0\). Then we have 
    \[G_i = [0,1+a_i],\qquad E_i=[0,b_i]\]
    with \(a_i,b_i>0\) both monotonously decreasing to \(0\), because \(a_i\le b_{i-1}\) and \(b_i = \frac{a_i}{2}\). But we have  \(E_\infty = \emptyset\).

    One can upgrade this example to the category of groups by looking at the free abelian group generated by \(E\) resp.\ \(G\), i.e.\ we set \(\hat E = \ZZ^{(E)}\) and \(\hat G = \ZZ^{(G)}\) with \(\hat\tau,\hat\sigma:\hat E\to \hat G\) such that \(\hat\tau|_E=\tau\) and \(\hat\sigma|_E=\sigma\). Then we have 
    \[\hat G_i = \ZZ^{(G_i)}\qquad\text{and}\qquad \hat E_i = \ZZ^{(E_i)}.\]
    This follows because 
    \[\hat\tau(\hat E_i) = \hat\tau(\ZZ^{(E_i)}) = \ZZ^{(\tau(E_i))}\qquad \text{and}\qquad \hat\sigma\inv(\hat G_i) = \hat\sigma\inv(\ZZ^{(G_i)}) = \ZZ^{(\sigma\inv(G_i))},\]
    where the last equality holds as \(\sigma\) is injective. In particular, we have \(\bigcap_{i\ge0}\hat E_i = \ZZ^{(\{0\})}\). But one quickly checks \(\ZZ^{(\{1\})}=\hat\sigma(\ZZ^{(\{0\})})\not\subseteq \hat\tau(\ZZ^{(\{0\})})=\ZZ^{(\{0\})}\), hence \(\hat E_\infty = \{0\}\).
\end{example}

The reason for this failure is that in general we have \(\tau(\bigcap_i E_i) \not= \bigcap_i \tau(E_i)\).

\begin{proposition}
    Given a zip datum \(\Zcal = (E,G,\tau,\sigma)\). If we have \(\tau(\bigcap_i E_i) = \bigcap_i \tau(E_i)\), then we get 
    \[E_\infty = \bigcap_{i\ge0} E_i \qquad\text{and}\qquad G_\infty = \bigcap_{i\ge0} G_i\]
\end{proposition}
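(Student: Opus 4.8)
The plan is to exploit the characterization of \(E_\infty\) as the \emph{largest} subgroup \(F \subseteq E\) with \(\sigma(F) \subseteq \tau(F)\). By Proposition~\ref{E_inf_invariant_refinement} we already have \(E_\infty \subseteq E_i\) for all \(i\), and applying \(\tau\) gives \(G_\infty = \tau(E_\infty) \subseteq \tau(E_{i-1}) = G_i\); hence the inclusions \(E_\infty \subseteq \bigcap_i E_i\) and \(G_\infty \subseteq \bigcap_i G_i\) hold with no hypothesis. It therefore remains to prove the reverse inclusions, and for \(E\) it is enough, by the maximality defining \(E_\infty\), to verify that \(F \coloneqq \bigcap_i E_i\) itself satisfies \(\sigma(F) \subseteq \tau(F)\).

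First I would isolate two facts coming straight from the refinement. Since \(E_i = \sigma\inv(\tau(E_{i-1}))\) and \(G_i = \tau(E_{i-1})\), we read off \(\sigma(E_i) \subseteq G_i\) for each \(i\); and since \(\tau(E_i) = G_{i+1}\), the decreasing family \((G_i)\) gives \(\bigcap_i \tau(E_i) = \bigcap_i G_{i+1} = \bigcap_i G_i\). Feeding in the hypothesis, I then obtain the key identification
\[\tau(F) = \tau\Bigl(\bigcap_i E_i\Bigr) = \bigcap_i \tau(E_i) = \bigcap_i G_i.\]

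The argument now closes. Because the image of an intersection is contained in the intersection of the images, and \(\sigma(E_i) \subseteq G_i\),
\[\sigma(F) = \sigma\Bigl(\bigcap_i E_i\Bigr) \subseteq \bigcap_i \sigma(E_i) \subseteq \bigcap_i G_i = \tau(F),\]
so \(F\) has the defining property of \(E_\infty\); maximality forces \(F \subseteq E_\infty\), and with the unconditional inclusion this yields \(E_\infty = \bigcap_i E_i\). Applying \(\tau\) and using the hypothesis a final time gives \(G_\infty = \tau(E_\infty) = \tau(\bigcap_i E_i) = \bigcap_i G_i\).

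The only genuinely load-bearing step is the identification \(\tau(\bigcap_i E_i) = \bigcap_i \tau(E_i)\), i.e.\ the hypothesis itself: without it one has merely \(\tau(F) \subseteq \bigcap_i G_i\), which is too weak to upgrade the automatic containment \(\sigma(F) \subseteq \bigcap_i G_i\) into \(\sigma(F) \subseteq \tau(F)\)---and the preceding example shows the conclusion really can fail in that case. Everything else is formal manipulation of the decreasing chains, so I expect no serious obstacle beyond correctly tracking the index shift between \(E_i\) and \(G_{i+1} = \tau(E_i)\).
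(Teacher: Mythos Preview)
Your argument is correct and essentially identical to the paper's: both establish the reverse inclusion by showing that \(F=\bigcap_i E_i\) satisfies \(\sigma(F)\subseteq\tau(F)\) via the chain \(\sigma(F)\subseteq\bigcap_i\tau(E_{i-1})=\tau(F)\) (the last equality being exactly the hypothesis), then invoke maximality of \(E_\infty\). The only cosmetic difference is that you route the computation through the auxiliary identification \(\tau(F)=\bigcap_i G_i\), whereas the paper goes directly to \(\bigcap_i\tau(E_i)\); the index-shift bookkeeping you flag is handled the same way in both.
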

\begin{proof}
    By proposition \ref{E_inf_invariant_refinement} we already know "\(\subseteq\)" in the first equation. We need to show that \(\sigma(\bigcap_i E_i)\subseteq \tau(\bigcap_i E_i)\). But we have 
    \[\sigma\left(\bigcap\nolimits_i E_i\right) \subseteq \sigma(E_i) = \sigma(\sigma\inv(\tau(E_{i-1}))) \subseteq \tau(E_{i-1})\]
    and therefore \(\sigma(\bigcap_i E_i) \subseteq \bigcap_i \tau(E_i) = \tau(\bigcap_i E_i)\).
    For the second equality we have
    \[G_\infty = \tau(E_\infty) = \tau\left(\bigcap\nolimits_i E_i\right) = \bigcap\nolimits_i \tau(E_i) = \bigcap\nolimits_i G_{i+1}.\]
\end{proof}

\begin{remark}
    This condition is in particular satisfied, if one (equivalently: both) of the sequences \(E_i\) or \(G_i\) becomes stationary. This happens for example if one of these groups is finite, or if we look at group schemes instead of abstract groups and the \(E_i\) or \(G_i\) are closed subgroups inside a Noetherian scheme. This is the reason why the refinement process becomes stationary in \cite{pink2011algebraiczipdata}. But note that this does not hold anymore if we look at displays instead of zips, as example \ref{display_GL2(W(R))} shows. However, in this example, \(\tau\) is injective, so the condition \(\tau(\bigcap_i E_i) = \bigcap_i \tau(E_i)\) is again satisfied.
\end{remark}

\begin{lemma}
\label{E_infty_multiplicative}
    Given a zip datum \(\Zcal = (E,G,\tau,\sigma)\) and \(x,y\in G\). 
    \begin{enumerate}
        \item If \(y\in G_1=\tau(E)\), then \(\Zcal_1^{yx} = (\Zcal_1^x)^y\) and in particular 
        \[E_\infty^{yx}(\mathcal Z)=E_\infty^y(\mathcal Z_1^x)\qquad\text{and}\qquad G_\infty^{yx}(\mathcal Z)=G_\infty^y(\mathcal Z_1^x).\]
        \item If \(x=y\) in \(\tau(E)\backslash G / \sigma(E)\), i.e.\ \(y=\tau(e)x\sigma(\tilde e)\), then \(E_1^y = {}^{\tilde e\inv}E_1^x\).
    \end{enumerate}
\end{lemma}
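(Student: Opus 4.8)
The plan is to reduce both statements to the explicit description of the refined conjugated datum recorded in the remark above, namely
\[(\Zcal^x)_1 = \bigl(\sigma\inv({}^{x\inv}\tau(E)),\,\tau(E),\,\tau,\,{}^x\sigma\bigr),\]
so that in particular \(E_1^x = \sigma\inv({}^{x\inv}\tau(E))\). Everything then follows from two elementary observations: first, that \(\tau(E)\) is normalized by each of its own elements, and second, that since \(\sigma\) is a group homomorphism it is compatible with conjugation, in the sense that \(\sigma\inv\bigl({}^{\sigma(g)\inv}H\bigr) = {}^{g\inv}\sigma\inv(H)\) for any \(g\in E\) and any subgroup \(H\subseteq G\); this last identity is checked directly by chasing an element through the definition of \(\sigma\inv\) and the equation \(\sigma(gfg\inv)=\sigma(g)\sigma(f)\sigma(g)\inv\).

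For part (1), I would compute the two quadruples and check that all four entries agree. The groups \(E\), \(G\) and the map \(\tau\) are untouched by both conjugation and refinement, so only the first entry and the \(\sigma\)-entry need attention. For the \(\sigma\)-entry, conjugation composes as \({}^y({}^x\sigma) = {}^{yx}\sigma\), which is exactly the \(\sigma\)-entry of \(\Zcal_1^{yx} = (\Zcal^{yx})_1\). For the first entry, \((\Zcal_1^x)^y\) inherits \(\sigma\inv({}^{x\inv}\tau(E))\) from \(\Zcal_1^x\), since conjugation does not change the underlying group, whereas \(\Zcal_1^{yx}\) has first entry \(\sigma\inv({}^{(yx)\inv}\tau(E))\); here the hypothesis \(y\in\tau(E)=G_1\) enters, giving \({}^{(yx)\inv}\tau(E) = {}^{x\inv}\bigl(y\inv\tau(E)y\bigr) = {}^{x\inv}\tau(E)\). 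Thus \(\Zcal_1^{yx} = (\Zcal_1^x)^y\). The equalities of \(E_\infty\) and \(G_\infty\) then follow by applying \(E_\infty\) and \(G_\infty\) to both sides and using that they are invariant under refinement (Proposition \ref{E_inf_invariant_refinement}), which identifies \(E_\infty^{yx}(\Zcal) = E_\infty(\Zcal_1^{yx})\) on the one side and leaves \(E_\infty((\Zcal_1^x)^y) = E_\infty^y(\Zcal_1^x)\) on the other, and likewise for \(G_\infty\).

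For part (2), I would again start from \(E_1^x = \sigma\inv({}^{x\inv}\tau(E))\) and \(E_1^y = \sigma\inv({}^{y\inv}\tau(E))\). Substituting \(y = \tau(e)x\sigma(\tilde e)\) and using that the element \(\tau(e)\in\tau(E)\) normalizes \(\tau(E)\), the conjugate simplifies to \({}^{y\inv}\tau(E) = {}^{\sigma(\tilde e)\inv}\bigl({}^{x\inv}\tau(E)\bigr)\). Applying the homomorphism identity from the first paragraph with \(g = \tilde e\) and \(H = {}^{x\inv}\tau(E)\) then yields \(E_1^y = \sigma\inv\bigl({}^{\sigma(\tilde e)\inv}H\bigr) = {}^{\tilde e\inv}\sigma\inv(H) = {}^{\tilde e\inv}E_1^x\), as claimed.

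The computations are routine; the only thing that requires care is bookkeeping the order of the conjugations and inverses, in particular ensuring that the hypothesis \(y\in\tau(E)\) is invoked at exactly the point where \(y\inv\tau(E)y\) must collapse back to \(\tau(E)\). I do not expect any genuine obstacle beyond this.
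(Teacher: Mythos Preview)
Your proposal is correct and follows essentially the same route as the paper: in both parts you reduce to the explicit formula \(E_1^x=\sigma\inv({}^{x\inv}\tau(E))\), use that \(\tau(E)\) is normalized by its own elements, and for part (2) pull the conjugation by \(\sigma(\tilde e)\inv\) through \(\sigma\inv\). Your explicit invocation of Proposition~\ref{E_inf_invariant_refinement} to pass from \(E_\infty^{yx}(\Zcal)\) to \(E_\infty(\Zcal_1^{yx})\) is the step the paper leaves implicit in the phrase ``in particular''.
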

\begin{proof}
    \begin{enumerate}
        \item The only thing we need to verify is that \(E_1^x = E_1^{yx}\), as then both zip data are just the zip datum \((E_1^x, \tau(E), \tau, {}^{yx}\sigma)\). But this follows from
        \begin{align*}
            E_1^{yx}(\mathcal Z)
        = ({}^{yx}\sigma)\inv(\tau(E))
        = \sigma\inv({}^{x\inv y\inv}\tau(E))
        = \sigma\inv({}^{x\inv \tau(e_y)\inv}\tau(E))
        &= \sigma\inv({}^{x\inv}\tau({}^{e_y}E))\\
        &= ({}^x\sigma)\inv(\tau(E))\\
        &= E_1^x,
        \end{align*}
        where \(e_y\in E\) with \(\tau(e_y)=y\).
        \item We have 
    \begin{align*}
        E_1^y &= \sigma\inv\big({}^{y\inv}\tau(E)\big)\\
        &= \sigma\inv\big({}^{\sigma(\tilde e)\inv x\inv\tau(e)\inv}\tau(E)\big)\\
        &= \sigma\inv\big({}^{\sigma(\tilde e)\inv x\inv}\tau(E)\big)\\
        &= {}^{\tilde e\inv}\sigma\inv\big({}^{x\inv}\tau(E)\big)\\
        &= {}^{\tilde e\inv}E_1^x. \qedhere
    \end{align*}
    \end{enumerate}
\end{proof}

\begin{definition}
\label{def_equiv_rel_zips}
    Given a zip datum \(\Zcal = (E,G,\tau,\sigma)\), we define an equivalence relation on \(G\) by saying \(y\sim x\) if there are \(e\in E\) and \(g_x\in G_\infty^x\) with \(y=\tau(e)g_x x\sigma(e)\inv\). If the zip datum may be ambiguous, we also write \(y \sim_\Zcal x\). We write \(o_\Zcal(x)\subseteq G\) for the equivalence class of \(x\in G\).
\end{definition}
\begin{proof}
    We show that this indeed defines an equivalence relation.
    Reflexivity is clear. For symmetry assume \(y=\tau(e)g_x x\sigma(e)\inv\). Choose \(e_x\in\tau\inv(g_x)\subseteq E_\infty^x\) and observe
    \[{}^y\sigma({}^eE_\infty^x)={}^{y\sigma(e)}\sigma(E_\infty^x)={}^{\tau(e)g_x x}\sigma(E_\infty^x) \subseteq {}^{\tau(e)g_x}\tau(E_\infty^x) = \tau({}^{e e_x}E_\infty^x) = \tau({}^e E_\infty^x)\]
    and hence \({}^eE_\infty^x \subseteq E_\infty^y\). Define \(e_y\coloneqq {}^e e_x\inv\in E_\infty^y\). With \(g_y=\tau(e_y)\) and \(\tilde e=e\inv\) we have 
    \[x = \tau(e_x)\inv\tau(e)\inv y \sigma(e) = \tau({}^{e\inv}e_y) \tau(e)\inv y \sigma(e) = \tau(e)\inv \tau(e_y) y \sigma(e) = \tau(\tilde e) g_y y \sigma(\tilde e)\inv,\]
    hence \(x\sim y\). For transitivity assume \(y=\tau(e_1)g_x x\sigma(e_1)\inv\) and \(z=\tau(e_2)g_y y\sigma(e_2)\inv\). Set \(e=e_2e_1\), then  
    \[z = \tau(e_2)g_y \tau(e_1)g_x x\sigma(e_1)\inv\sigma(e_2)\inv = \tau(e) \tau({}^{e_1\inv}e_y) g_x x \sigma(e)\inv\]
    and \({}^{e_1\inv}e_y\in {}^{e_1\inv}E_\infty^y\subseteq E_\infty^x\) by the same reason as before, so \(z\sim x\).
\end{proof}

\begin{remark}
    The proof shows that if \(y\sim x\) with \(y=\tau(e)g_x x\sigma(e)\inv\), then we have \({}^e E_\infty^x = E_\infty^y\).
\end{remark}

\begin{remark}
    Given a zip datum \(\Zcal = (E,G,\tau,\sigma)\), we can look at the quotient groupoid \([E\backslash G]\) given by the action \(e.g\coloneqq \tau(e)g\sigma(e)\inv\). The equivalence relation \(\sim_\Zcal\) is coarser than the one given by this group action, so we get a map \(|[E\backslash G]|\to G/\sim_\Zcal\). In particular, if we know the equivalence classes of \(\sim_\Zcal\), we get a decomposition of \(G\) by unions of orbits of the group action. In the case of \(G\)-zips, each equivalence class is even a single orbit, so the map above is bijective, see \cite[9.18]{pink2011algebraiczipdata}.
\end{remark}

\begin{example}
    If \(\tau\) is surjective, there is only one equivalence class, as in this case \(G=G_\infty^x\) for all \(x\in G\) and we can choose \(e\) such that \(y=\tau(e)\) and then \(g_x\coloneqq \sigma(e)x\inv\). This is in particular the case for the zip datum \(\Zcal_\infty^x\coloneqq(E_\infty^x,G_\infty^x,\tau,{}^x\sigma)\).
\end{example}

\begin{proposition}
\label{prop_zip_refinement_one_step}
    Given a zip datum \(\Zcal = (E,G,\tau,\sigma)\) and \(x\in G\). The map 
    \[\Psi_\Zcal^x: \underbrace{G_1^x}_{=\tau(E)} \to \tau(E)x\sigma(E),\quad y\mapsto yx\]
    maps \(\mathcal Z_1^x\)-equivalence classes in \(G_1^x\) bijectively to \(\mathcal Z\)-equivalence classes in \(\tau(E)x\sigma(E)\). Furthermore, we have 
    \[o_{\mathcal Z_1^x}(y)\cdot x = o_{\mathcal Z}(yx) \cap G_1^x\cdot x ,\qquad y\in G_1^x.\]
\end{proposition}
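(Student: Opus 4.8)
The plan is to show that $\Psi_\Zcal^x$ induces a well-defined map $o_{\Zcal_1^x}(y)\mapsto o_\Zcal(yx)$ on equivalence classes, that this induced map is bijective, and that the sharper intersection identity falls out of the same computations. The engine throughout is Lemma \ref{E_infty_multiplicative}(1): since every $y\in G_1^x=\tau(E)=G_1$ lies in $\tau(E)$, it gives $G_\infty^{yx}(\Zcal)=G_\infty^y(\Zcal_1^x)$, which is exactly what translates the ``$G_\infty$-part'' of one relation into the other.

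First I would unwind both relations. For $y,y'\in G_1^x=\tau(E)$, the relation $y'\sim_{\Zcal_1^x}y$ means $y'=\tau(e)\,g\,y\,({}^x\sigma(e))\inv$ with $e\in E_1^x$ and $g\in G_\infty^y(\Zcal_1^x)$. Right-multiplying by $x$ and using ${}^x\sigma(e)=x\sigma(e)x\inv$ turns this into $y'x=\tau(e)\,g\,(yx)\,\sigma(e)\inv$. By Lemma \ref{E_infty_multiplicative}(1) we have $g\in G_\infty^{yx}(\Zcal)$, and $e\in E_1^x\subseteq E$, so this reads precisely $y'x\sim_\Zcal yx$. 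This establishes well-definedness on classes and, at the same time, the inclusion $o_{\Zcal_1^x}(y)\cdot x\subseteq o_\Zcal(yx)\cap G_1^x\cdot x$ (the condition $y'x\in G_1^x\cdot x=\tau(E)x$ being automatic).

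For injectivity and the reverse inclusion I would run the same computation backwards, and here lies the one genuinely clever point. Suppose $y'x\sim_\Zcal yx$, say $y'x=\tau(e)\,g\,(yx)\,\sigma(e)\inv$ with $e\in E$ and $g\in G_\infty^{yx}(\Zcal)$; rearranging gives $y'=\tau(e)\,g\,y\,({}^x\sigma(e))\inv$, hence ${}^x\sigma(e)=(y')\inv\tau(e)\,g\,y$. The right-hand side lies in $\tau(E)$, because $y',\tau(e),y\in\tau(E)$ and $g\in G_\infty^{yx}(\Zcal)\subseteq\tau(E)$; therefore $e\in({}^x\sigma)\inv(\tau(E))=E_1^x$ \emph{for free}. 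Together with $g\in G_\infty^y(\Zcal_1^x)$ from Lemma \ref{E_infty_multiplicative}(1), this is exactly $y'\sim_{\Zcal_1^x}y$. This yields injectivity on classes and the inclusion $o_\Zcal(yx)\cap G_1^x\cdot x\subseteq o_{\Zcal_1^x}(y)\cdot x$, completing the intersection identity.

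It remains to prove surjectivity, which amounts to checking that $\tau(E)x\sigma(E)$ is a union of $\Zcal$-classes, each meeting $\tau(E)x$. Given $z=\tau(a)x\sigma(b)\in\tau(E)x\sigma(E)$, I would apply the $\Zcal$-relation with $e=b$ and $g=1$ (which lies in $G_\infty^z$, this being a subgroup): one computes $\tau(b)z\sigma(b)\inv=\tau(ba)x$, so $z\sim_\Zcal\tau(ba)x=\Psi_\Zcal^x(\tau(ba))$ with $\tau(ba)\in G_1^x$. Hence every $\Zcal$-class in $\tau(E)x\sigma(E)$ is the image of a $\Zcal_1^x$-class, finishing bijectivity. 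I expect no single hard estimate here; the main obstacle is purely the bookkeeping of the conjugation ${}^x\sigma$ and keeping straight which $G_\infty$ belongs to which datum. The substantive input, the identification $G_\infty^{yx}(\Zcal)=G_\infty^y(\Zcal_1^x)$, is already supplied by Lemma \ref{E_infty_multiplicative}(1), and the only real trick is noticing that the restriction $e\in E_1^x$ comes automatically.
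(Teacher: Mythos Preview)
Your argument is correct and follows essentially the same route as the paper: both proofs hinge on Lemma~\ref{E_infty_multiplicative}(1) to identify $G_\infty^{yx}(\Zcal)=G_\infty^y(\Zcal_1^x)$, both observe that the constraint $e\in E_1^x$ is automatic from ${}^x\sigma(e)=(y')^{-1}\tau(e)gy\in\tau(E)$, and both handle surjectivity by the same computation $\tau(b)\,z\,\sigma(b)^{-1}=\tau(ba)x$ with $g=1$. The only cosmetic difference is that the paper packages well-definedness and injectivity together by first proving the intersection identity, whereas you separate the two directions; the content is the same.
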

\begin{proof}
    We first show the equation. An element \(z\in G_1^x =\tau(E)\) is in \(o_{\mathcal Z_1^x}(y)\) if and only if there are \(e\in E_1^x\) and \(g_y\in G_\infty^y(\mathcal Z_1^x) \overset{\ref{E_infty_multiplicative}}{=} G_\infty^{yx}\) such that \(z = \tau(e)g_y y ({}^x\sigma(e))\inv\). On the other hand, \(zx\) is in \(o_{\mathcal Z}(yx)\) if and only if there are \(\tilde e\in E\) and \(g_{yx}\in G_\infty^{yx}\) with \(zx = \tau(\tilde e)g_{yx}yx\sigma(\tilde e)\inv\), or equivalently \(z = \tau(\tilde e)g_{yx}y({}^x\sigma(\tilde e))\inv\). Hence we want to choose \(e=\tilde e\) and \(g_y=g_{yx}\), and the only thing we need to verify for this is that \(\tilde e\in E_1^x=({}^x\sigma)\inv(\tau(E))\). But after rearranging we have \({}^x\sigma(\tilde e) = z\inv\tau(\tilde e)g_{yx}y\in\tau(E)\), as every factor is in \(\tau(E)\), so we are done. 

    Now we show the first claim. The equation we just showed implies that the map on equivalence classes is well-defined and injective. 
    But an element of the form \(\tau(e)x\sigma(\tilde e)\) is \(\mathcal Z\)-equivalent to 
    \[\tau(\tilde e)\cdot1\cdot\tau(e)x\sigma(\tilde e)\sigma(\tilde e)\inv = \tau(\tilde e e)x,\]
    which lies in the image of the map, so we are also surjective on equivalence classes.
\end{proof}

This proposition suggests that if we change \(x\) inside one double coset, then the refined zip datum should not change too much. We want to make this precise next.

\begin{lemma}
\label{double_quotient_conjugation}
    If \(G\) is a group, \(H,K\subseteq G\) are subgroups and \(x,y\in G\), then 
    \begin{align*}
        H\backslash G/K &\xrightarrow{\sim} {}^xH\backslash G/{}^yK,\\
        g&\mapsto xgy\inv
    \end{align*}
    is a well-defined bijection.
\end{lemma}
\begin{proof}
    We need to check that \(g\) and \(hgk\) are mapped to equivalent elements for \(h\in H\) and \(k\in K\). This follows from
    \[xhgky\inv = xhx\inv xgy\inv yky\inv = ({}^xh) xgy\inv ({}^yk)\sim xgy\inv.\]
    The inverse is just \(g\mapsto x\inv gy\).
\end{proof}

\begin{lemma}
\label{functor_quotient_groupoid}
    Given groups \(G,H\) and a \(G\)-set \(X\), an \(H\)-set \(Y\), a group homomorphism \(\psi:G\to H\) and a map \(f:X\to Y\), then we obtain a functor of groupoids \([G\backslash X]\to [H\backslash Y]\) if \(f(g.x) = \psi(g).f(x)\) holds for all \(g\in G\) and \(x\in X\).
\end{lemma}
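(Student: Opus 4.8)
The plan is to write the functor down explicitly and then check the two defining properties of a functor, namely preservation of identities and of composition. Recall that in the action groupoid \([G\backslash X]\) the objects are the elements of \(X\), a morphism \(x\to x'\) is an element \(g\in G\) with \(g.x = x'\), composition is multiplication in \(G\) (if \(g.x=x'\) and \(h.x'=x''\) then \((hg).x = h.(g.x)=x''\)), and the identity at \(x\) is the neutral element \(e_G\); the groupoid \([H\backslash Y]\) is described in the same way. I would define the candidate functor \(F:[G\backslash X]\to[H\backslash Y]\) on objects by \(F(x)=f(x)\) and on morphisms by sending \(g\colon x\to x'\) to \(\psi(g)\colon f(x)\to f(x')\).

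First I would verify that \(F\) is well-defined on morphisms, i.e.\ that \(\psi(g)\) really is a morphism \(f(x)\to f(x')\) in \([H\backslash Y]\). Since \(x'=g.x\), this amounts to the identity \(\psi(g).f(x)=f(g.x)\), which is exactly the equivariance hypothesis \(f(g.x)=\psi(g).f(x)\). This is the only place where the compatibility assumption enters, and it is the actual content of the lemma: without it the assignment on morphisms would not land in the correct Hom-set.

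It then remains to check functoriality, which follows purely from \(\psi\) being a group homomorphism. The identity morphism \(e_G\) at \(x\) is sent to \(\psi(e_G)=e_H\), the identity at \(f(x)\); and for composable \(g\colon x\to x'\) and \(h\colon x'\to x''\) their composite is \(hg\), whence \(F(hg)=\psi(hg)=\psi(h)\psi(g)=F(h)F(g)\). Thus \(F\) preserves identities and composition and is a functor of groupoids. There is no genuine obstacle here: the whole point is the matching of source and target of morphisms, which is guaranteed precisely by the equivariance condition, while functoriality reduces to the homomorphism axioms for \(\psi\).
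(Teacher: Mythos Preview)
Your proof is correct and follows essentially the same approach as the paper: define the functor by \(x\mapsto f(x)\) on objects and \(g\mapsto\psi(g)\) on morphisms, use the equivariance hypothesis to verify that \(\psi(g)\) lands in the right Hom-set, and then invoke that \(\psi\) is a group homomorphism for functoriality. Your write-up is a bit more explicit about identities and composition, but the argument is the same.
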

\begin{proof}
    On objects, the functor maps \(x\) to \(f(x)\), so we need to check that if we have 
    \[g\in \Hom(x,x') = \set{g\in G}{g.x=x'},\]
    then we get 
    \[\psi(g)\in \Hom(f(x),f(x')),\]
    and for this one needs 
    \(\psi(g).f(x)\overset{!}{=}f(x')=f(g.x)\). Morphisms are compatible with composition as \(\psi\) is a group homomorphism, so we indeed get a functor.
\end{proof}

\begin{proposition}
    If \(x=y\) in \(\tau(E)\backslash G/\sigma(E)\), with \(y=\tau(e)x\sigma(\tilde e)\), then
    \begin{align*}
        \Psi: (E_1^x, G_1^x, \tau, {}^x\sigma) &\to (E_1^y, G_1^y, \tau, {}^y\sigma)\\
        E_1^x\ni \varepsilon &\mapsto {}^{\tilde e\inv}\varepsilon \in E_1^y\\
        G_1^x\ni g &\mapsto \tau(\tilde e)\inv g \underbrace{x \sigma(\tilde e)y\inv}_{=\tau(e)\inv} = \tau(\tilde e)\inv g xy\inv \cdot{}^y\sigma(\tilde e) \in G_1^y
        \intertext{induces the bijection}
        \tau(E_1^x)\backslash G_1^x /{}^x\sigma(E_1^x) &\xrightarrow{\sim} \tau(E_1^y)\backslash G_1^y /{}^y\sigma(E_1^y)
    \end{align*}
    from lemma \ref{double_quotient_conjugation}, and \(\Psi\) induces an equivalence of groupoids \([E_1^x\backslash G_1^x]\xrightarrow{\sim} [E_1^y\backslash G_1^y]\).
\end{proposition}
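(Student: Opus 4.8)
The plan is to realize $\Psi$ as an instance of Lemma \ref{functor_quotient_groupoid}, then upgrade the resulting functor to an isomorphism of groupoids, and separately identify the induced map on double cosets with the bijection of Lemma \ref{double_quotient_conjugation}. Throughout I write $\psi \colon E_1^x \to E_1^y,\ \varepsilon \mapsto {}^{\tilde e\inv}\varepsilon$ for the group component of $\Psi$ and $f \colon G_1^x \to G_1^y,\ g \mapsto \tau(\tilde e)\inv g\tau(e)\inv$ for the $G$-component. First I would record the elementary consequences of $y = \tau(e)x\sigma(\tilde e)$, namely $x\sigma(\tilde e)y\inv = \tau(e)\inv$, which verifies that the two displayed formulas for $f$ agree, and $y\sigma(\tilde e)\inv = \tau(e)x$, which I will use below. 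Since $G_1^x = \tau(E) = G_1^y$, the map $f$ is a well-defined bijection with inverse $h \mapsto \tau(\tilde e)h\tau(e)$, and $\psi$ is a group isomorphism onto $E_1^y$ by part (2) of Lemma \ref{E_infty_multiplicative}.

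The core of the proof is the equivariance hypothesis of Lemma \ref{functor_quotient_groupoid}. Writing the refined action as $\varepsilon.g = \tau(\varepsilon)g({}^x\sigma(\varepsilon))\inv$, I would establish the two identities
\[
\tau(\psi(\varepsilon)) = {}^{\tau(\tilde e)\inv}\tau(\varepsilon), \qquad {}^y\sigma(\psi(\varepsilon)) = {}^{\tau(e)}\bigl({}^x\sigma(\varepsilon)\bigr),
\]
where the first is immediate and the second follows from $y\sigma(\tilde e)\inv = \tau(e)x$ after expanding ${}^y\sigma({}^{\tilde e\inv}\varepsilon) = y\sigma(\tilde e)\inv\sigma(\varepsilon)\sigma(\tilde e)y\inv$. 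Feeding these into $\psi(\varepsilon).f(g) = \tau(\psi(\varepsilon))\,f(g)\,({}^y\sigma(\psi(\varepsilon)))\inv$, the inner $\tau(\tilde e)$ and $\tau(e)$ cancel directly against the factors of $f(g)$, leaving $\tau(\tilde e)\inv\tau(\varepsilon)\,g\,({}^x\sigma(\varepsilon))\inv\tau(e)\inv = f(\varepsilon.g)$. Thus $(\psi,f)$ satisfies the hypothesis of Lemma \ref{functor_quotient_groupoid} and defines a functor $[E_1^x\backslash G_1^x] \to [E_1^y\backslash G_1^y]$.

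To see this functor is an equivalence—indeed an isomorphism of groupoids—I would apply the same construction to the reversed presentation $x = \tau(e\inv)y\sigma(\tilde e\inv)$. This yields a functor in the opposite direction whose group component is ${}^{\tilde e}(-)$ and whose object component is $h \mapsto \tau(\tilde e)h\tau(e)$, which are literally the inverses of $\psi$ and $f$. Hence the two functors compose to the identity in both directions. For the double-coset statement, the second boxed identity gives ${}^y\sigma(E_1^y) = {}^{\tau(e)}({}^x\sigma(E_1^x))$, while $E_1^y = {}^{\tilde e\inv}E_1^x$ gives $\tau(E_1^y) = {}^{\tau(\tilde e)\inv}\tau(E_1^x)$; therefore $f$ is exactly the map of Lemma \ref{double_quotient_conjugation} with conjugating elements $\tau(\tilde e)\inv$ and $\tau(e)$, inducing the asserted bijection $\tau(E_1^x)\backslash G_1^x/{}^x\sigma(E_1^x) \xrightarrow{\sim} \tau(E_1^y)\backslash G_1^y/{}^y\sigma(E_1^y)$.

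I expect the equivariance computation to be the only real obstacle: all of the twisting distinguishing $x$ from $y$ is concentrated in the identity ${}^y\sigma(\psi(\varepsilon)) = {}^{\tau(e)}({}^x\sigma(\varepsilon))$, and it only collapses after inserting $y = \tau(e)x\sigma(\tilde e)$ in its rearranged form $y\sigma(\tilde e)\inv = \tau(e)x$. Once that identity is in hand, the cancellations and both the groupoid equivalence and the double-coset bijection are purely formal.
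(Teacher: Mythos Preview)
Your proposal is correct and follows essentially the same route as the paper: both arguments reduce to the two identities $\tau(\psi(\varepsilon)) = {}^{\tau(\tilde e)\inv}\tau(\varepsilon)$ and ${}^y\sigma(\psi(\varepsilon)) = {}^{\tau(e)}\bigl({}^x\sigma(\varepsilon)\bigr)$, then check the equivariance condition of Lemma~\ref{functor_quotient_groupoid} and identify the induced map on double cosets with Lemma~\ref{double_quotient_conjugation}. Your explicit construction of the inverse functor from the reversed presentation $x = \tau(e\inv)y\sigma(\tilde e\inv)$ is a small addition; the paper leaves this implicit, relying on the fact that $\psi$ and $f$ are visibly bijections.
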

\begin{proof}
    For the bijection we need to check that 
    \[{}^{\tau(\tilde e)\inv}\tau(E_1^x) = \tau(E_1^y) 
    \qquad\text{and}\qquad
    {}^{\tau(e)}\bigl( {}^x\sigma(E_1^x) \bigr) = {}^y\sigma(E_1^y).\]
    The first one follows directly from the equation \(E_1^y = {}^{\tilde e\inv} E_1^x\) from lemma \ref{E_infty_multiplicative}. For the second equation we compute
    \[{}^{\tau(e)}\bigl( {}^x\sigma(E_1^x) \bigr) = {}^{\tau(e)x}\sigma({}^{\tilde e}E_1^y) = {}^{y\sigma(\tilde e)\inv}\sigma({}^{\tilde e}E_1^y) = {}^y\sigma(E_1^y).\]
    By lemma \ref{functor_quotient_groupoid}, such a map \(\Psi\) induces a map of quotient groupoids if we have \(\Psi(\varepsilon.g) = \Psi(\varepsilon).\Psi(g)\). This equality holds as
    \begin{align*}
        \Psi(\varepsilon).\Psi(g) 
        &= \tau({}^{\tilde e\inv}\varepsilon) \tau(\tilde e)\inv g\tau(e)\inv \cdot{}^y\sigma({}^{\tilde e\inv} \varepsilon)\inv\\
        &= \tau(\tilde e)\inv \tau(\varepsilon) g x\sigma(\tilde e) \sigma({}^{\tilde e\inv} \varepsilon)\inv y\inv\\
        &= \tau(\tilde e)\inv \tau(\varepsilon) g x\sigma(\varepsilon)\inv \sigma(\tilde e) y\inv
        \shortintertext{and}
        \Psi(\varepsilon.g) 
        &= \tau(\tilde e)\inv \tau(\varepsilon) g \cdot{}^x\sigma(\varepsilon)\inv \tau(e)\inv\\
        &= \tau(\tilde e)\inv \tau(\varepsilon) g x \sigma(\varepsilon)\inv x\inv \tau(e)\inv\\
        &= \tau(\tilde e)\inv \tau(\varepsilon) g x \sigma(\varepsilon)\inv \sigma(\tilde e) y\inv. \qedhere
    \end{align*}
\end{proof}

Now we can describe the \(\Zcal\)-equivalence classes using the refinement process. This description will be helpful, if we understand the double quotients \(\tau(E)\backslash G/\sigma(E)\). If \(G\) is a matrix group, these can often be understood with Bruhat decompositions, and then these double quotients will be subsets of the Weyl group of \(G\). The idea now is that if we want to understand the equivalence classes, we need to understand the double quotient, and each point in the double quotient again looks like a zip datum, which we investigate in the same way. So in the end, if we want to specify an equivalence class, we can equivalently give a sequence of elements of certain double quotients. This is the same idea as if you want to specify where you are in the world, you give the continent, then the country, then the state and so on. 

In order to make this description formal, we use the language of a \emph{rooted forest} from graph theory, which is a graph without cycles, together with one distinguished node for each connected component. These nodes are called the roots of the forest. We say that a node lives in generation \(n\), if the unique path to the root of its connected component has length \(n\). In particular, the roots live in generation 0. Every rooted forest \(R\) has a unique decomposition \(R=\coprod_{n\in\NN}R_n\) into its generations.
\begin{definition}
\label{def_forest_double_quotient}
    A \emph{system of representatives} \(R=\coprod_{n\in\NN} R_n\) for a zip datum \(\Zcal = (E,G,\tau,\sigma)\) is a rooted forest with vertices in \(G\), such that the roots are a system of representatives of \(\tau(E)\backslash G/\sigma(E)\) and a vertex \(x_n\in R_n\) has as children a system of representatives of \(\tau(E_{n+1}^{\tilde x_n})\backslash G_{n+1}^{\tilde x_n}/{}^{\tilde x_n}\sigma(E_{n+1}^{\tilde x_n})\), where \(\tilde x_n = x_n\cdots x_0\) is the product along the unique path to the root of the tree (\(x_{i}\) is the parent of \(x_{i+1}\)).
\end{definition}

\begin{theorem}
\label{thm_forest_double_quotient}
    Given a zip datum \(\Zcal = (E,G,\tau,\sigma)\) with a system of representatives \(R\). Assume that \(R\) becomes stationary in the sense that \(R_N=R_{N+1}\) for \(N\gg0\). Then we have a bijection
    \[G/{\sim_\Zcal} \cong \set{(r_n)\in \prod_{n\in\NN}R_n}{r_n  \text{ is the parent of }r_{n+1}} = \lim_n R_n,\]
    where the transition maps are given by mapping an element to its parent.
\end{theorem}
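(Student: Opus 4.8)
The plan is to realize the desired bijection as the inverse limit of a family of maps \(\Phi_n\colon G/{\sim_\Zcal}\to R_n\) obtained by iterating Proposition \ref{prop_zip_refinement_one_step} down the forest. First note that every \(\Zcal\)-equivalence class lies in a single double coset of \(\tau(E)\backslash G/\sigma(E)\): if \(y\sim x\) then \(y=\tau(e)g_x x\sigma(e)\inv\) with \(g_x\in G_\infty^x=\tau(E_\infty^x)\subseteq\tau(E)\), so \(y\in\tau(E)x\sigma(E)\). This produces \(\Phi_0\), sending a class to the root representing its double coset. Over a fixed root \(r_0\), Proposition \ref{prop_zip_refinement_one_step} identifies the \(\Zcal\)-classes in \(\tau(E)r_0\sigma(E)\) with the \(\Zcal_1^{r_0}\)-classes in \(G_1^{r_0}\) via \(\Psi_\Zcal^{r_0}\); repeating this for \(\Zcal_1^{r_0}\) and its children yields \(\Phi_1\), and so on. Recording at each stage into which double coset (i.e.\ under which child) a class falls gives a compatible family \((\Phi_n)\), hence a map \(\Phi\colon G/{\sim_\Zcal}\to\lim_n R_n\).

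The key bookkeeping is to verify that iterating the one-step refinement along a path \(r_0,\dots,r_n\) reproduces exactly the zip data of Definition \ref{def_forest_double_quotient}. Writing \(\tilde r_n=r_n\cdots r_0\), I would show by induction that applying the \(\Psi\)-maps along the path carries \(\Zcal\) to \(\Zcal_{n+1}^{\tilde r_n}\), so that the children of \(r_n\) are representatives of its double quotient and \(\Phi_n\inv(r_n)\) is in bijection with the \(\Zcal_{n+1}^{\tilde r_n}\)-classes in \(G_{n+1}^{\tilde r_n}\). The inductive step rests on the identity \((\Zcal_n^{\tilde r_{n-1}})_1^{r_n}=\Zcal_{n+1}^{\tilde r_n}\); since each child satisfies \(r_n\in G_n^{\tilde r_{n-1}}\), this reduces to the claim that \((\Zcal_n^{x})^{y}=\Zcal_n^{yx}\) whenever \(y\in G_n^{x}\). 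This generalizes Lemma \ref{E_infty_multiplicative}(1) from \(\Zcal_1\) to \(\Zcal_n\), and I expect it to follow from the same computation: by induction on \(i\le n\) one has \(E_i^{yx}=({}^x\sigma)\inv({}^{y\inv}\tau(E_{i-1}^{x}))\), and \(y\in\tau(E_{n-1}^{x})\subseteq\tau(E_{i-1}^{x})\) normalizes \(\tau(E_{i-1}^{x})\), whence \(E_i^{yx}=E_i^{x}\) and \(G_i^{yx}=G_i^{x}\).

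It then remains to prove that \(\Phi\) is a bijection, and this is where stationarity enters. By assumption the parent maps \(R_{n+1}\to R_n\) are bijective for \(n\ge N\), so projection gives \(\lim_n R_n\xrightarrow{\sim} R_N\) and it suffices to show \(\Phi_N\) is a bijection. Surjectivity is immediate, as every root and every child represents a nonempty double coset. Injectivity amounts to the statement that each fiber \(\Phi_N\inv(r_N)\) is a single point, i.e.\ that the stationary zip datum \(\Zcal_{N+1}^{\tilde r_N}\) has exactly one equivalence class.

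I expect this last point to be the main obstacle. Combinatorial stationarity only tells us that the double quotients are singletons from level \(N\) onward, and the telescoping of Proposition \ref{prop_zip_refinement_one_step} (with the unique representative \(1\)) then shows that the number of classes of \(\Zcal_{N+1}^{\tilde r_N}\) is merely constant under further refinement, not that it equals \(1\). To force a single class I would argue that stationarity makes the refinement converge, so that \(E_{N+1}^{\tilde r_N}=E_\infty^{\tilde r_N}\) and \(G_{N+1}^{\tilde r_N}=G_\infty^{\tilde r_N}=\tau(E_\infty^{\tilde r_N})\); then \(\Zcal_{N+1}^{\tilde r_N}\) coincides with \(\Zcal_\infty^{\tilde r_N}\), \(\tau\) is surjective onto its \(G\)-group, and the example with surjective \(\tau\) preceding Proposition \ref{prop_zip_refinement_one_step} gives a single class. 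Pinning down this identification of the stationary datum with the \(E_\infty\)-datum — and thereby explaining why the process genuinely terminates rather than merely stabilizing its class count — is the delicate heart of the argument; once it is established, \(\Phi_N\) is bijective and the theorem follows.
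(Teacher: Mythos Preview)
Your strategy is the paper's own: iterate Proposition \ref{prop_zip_refinement_one_step} down the forest to define \(\Phi=(\Phi_n)\), with inverse \((r_n)\mapsto r_N\cdots r_0\). Your write-up is in fact more careful than the paper's three-line sketch, which simply names the two maps and asserts they are mutually inverse. The bookkeeping identity \((\Zcal_n^{\tilde r_{n-1}})_1^{r_n}=\Zcal_{n+1}^{\tilde r_n}\) and your inductive proof of it via the extension of Lemma \ref{E_infty_multiplicative}(1) to higher \(n\) are correct, and you correctly reduce bijectivity of \(\Phi_N\) to the claim that \(G_{N+1}^{\tilde r_N}\) carries a single \(\Zcal_{N+1}^{\tilde r_N}\)-class.

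That last claim, however, does not follow from the stated hypothesis, and your proposed fix---that stationarity of \(R\) forces \(E_{N+1}^{\tilde r_N}=E_\infty^{\tilde r_N}\)---fails. Indeed the theorem as written appears to be false. Take \(E=G=\bigoplus_{n\ge0}\ZZ/2\), \(\sigma=\mathrm{id}\), and \(\tau\) the right shift \((a_0,a_1,\dots)\mapsto(0,a_0,a_1,\dots)\). Since \(\sigma(E)=G\), every double quotient along the unique branch is a singleton and one may take \(R_n=\{0\}\) for all \(n\); yet \(E_\infty=0\), and writing \((\tau-\sigma)(a)=(a_0,a_0+a_1,a_1+a_2,\dots)\) one sees its image is the index-\(2\) subgroup \(\{b:\sum_ib_i=0\}\), so \(G/{\sim_\Zcal}\cong\ZZ/2\) while \(\lim_nR_n\) is a single point. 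What \emph{would} rescue your argument is stationarity of the zip data themselves along each branch, say \(E_{N+1}^{a}=E_N^{a}\) with \(a=\tilde r_N\): then \({}^a\sigma(E_N^a)\subseteq\tau(E_N^a)\) gives \(E_N^a=E_\infty^a\), \(\tau\) is surjective on \(\Zcal_{N+1}^a\), and your final paragraph finishes cleanly. But this is strictly stronger than stationarity of \(R\)---the paper's own \(\GL_2(W(k))\) example has a branch with strictly decreasing \(E_n\) and trivial double quotients throughout. The gap you flagged is therefore genuine and is shared by the paper's own proof.
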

\begin{proof}
    The bijection is given by mapping an \(x\in G\) to the vector \((r_n)\) characterized in the following way: The zeroth entry is the unique \(r_0\in R_0\) such that \(x=r_0\) in \(\tau(E)\backslash G/\sigma(E)\). The children of \(r_0\) are a system of representatives for the double quotient of the zip datum \(\Zcal_1^{r_0}\), and \(r_1\) is the representative of \(x r_0\inv\) in this double quotient, compare proposition \ref{prop_zip_refinement_one_step}. In general, \(r_n\) is the representative of \(x r_0\inv \cdots r_{n-1}\inv\) in the double quotient of \(\Zcal_n^{r_{n-1}\cdots r_0}\). The inverse is given by \((r_n)\mapsto r_N\cdots r_0\) for \(N\gg0\).
\end{proof}

\begin{example}
    Let's apply this theorem to the zip datum of example \ref{display_GL2(W(R))}, namely \(G=\GL_2(W(k))\) and \(E=\left\{\begin{psmallmatrix}
        * & * \\ p & *
    \end{psmallmatrix}\right\}\). To compute \(R_0\) we must understand 
    \[\tau(E)\backslash G/ \sigma(E) = 
    \left\{\begin{psmallmatrix}
        * & * \\ p & *
    \end{psmallmatrix}\right\}
    \backslash \GL_2(W(k)) /
    \left\{\begin{psmallmatrix}
        * & p \\ * & *
    \end{psmallmatrix}\right\}.\]
    This is a special case of the (affine) Bruhat decomposition from \cite[5.1]{kaletha2023bruhat}, namely it has two elements, represented by the identity matrix and \(x=\begin{psmallmatrix}
        0 & 1 \\ 1 & 0
    \end{psmallmatrix}\). For these two points we already computed the refined zip data. For \(x\) we saw that \(\tau\) is surjective on \(\Zcal_1^x\), so the double quotient is trivial (as well as the double quotients for all the refined ones). For the identity matrix the situation is less clear, as the refinements of the zip datum don't become stationary. The double quotient of the \(i\)th refinement is 
    \[\left\{\begin{pmatrix}
        * & * \\ p^{i+1} & * 
    \end{pmatrix}\right\} \scalebox{2}{\(\backslash\)}
    \raisebox{5pt}{\(
    \left\{\begin{pmatrix}
        * & * \\ p^i & *
    \end{pmatrix}\right\}
    \)}
    \scalebox{2}{\(/\)} 
    \left\{\begin{pmatrix}
        * & p \\ p^i & *
    \end{pmatrix}\right\},\]
    and we claim that all of them only consist of one element. For this it is enough to show that given a matrix \(\begin{psmallmatrix}
        a & b \\ p^i c & d
    \end{psmallmatrix}\), we can find a matrix \(\begin{psmallmatrix}
        \alpha & \beta \\ p^{i+1}\gamma & \delta
    \end{psmallmatrix}\), such that 
    \[\begin{pmatrix}
        \alpha & \beta \\ p^{i+1}\gamma & \delta
    \end{pmatrix} \cdot
    \begin{pmatrix}
        a & b \\ p^i c & d
    \end{pmatrix} \in \left\{\begin{pmatrix}
        * & p \\ p^i & *
    \end{pmatrix}\right\}.\]
    All we need to do for this is to show that the upper left entry, which equals \(\alpha b + \beta d\), is divisible by \(p\). Note that \(\alpha\) must be a unit, but \(\beta\) can be chosen arbitrary, so we can set \(\alpha=\delta=1\), \(\gamma=0\) and \(\beta=-bd\inv\). Hence all these double quotients are trivial and we see that \(R_n=R_0\) for all \(n>0\), so that there are exactly two \(\sim_\Zcal\)-equivalence classes.
\end{example}

\begin{example}
    The "induction step" from \cite[4]{pink2011algebraiczipdata}, which is used to describe the topological space of \(G\Zip^\mu\), see \cite[5.10]{pink2011algebraiczipdata}, almost fits into our framework. We have the zip datum
    \begin{align*}
        E_\mu=\set{(p,q)\in P\times Q}{\phi(\bar p) = \bar q} &\to G\\
        \tau: (p,q) &\mapsto p\\
        \sigma: (p,q) &\mapsto q.
    \end{align*}
    In our language, the group \(G_1\) would be \(P\), but in the "induction step" of \cite{pink2011algebraiczipdata} the authors additionally take the quotient by the unipotent radical (and do a conjugation), in order to obtain a zip datum where the group \(G\) is again reductive. This makes the description of the double quotients easier, as we can use the Bruhat decomposition in every refinement step. Let's make the refined zip datum from the "induction step" precise: 

    Let \(I,J\) be the types of \(P,Q\) and \(L,M\) be the Levi quotients. Let \(y\in {}^{\phi\inv J}W^I\) with \(J=\phi({}^yI)\), so that \((T,B,\dot y)\) is a frame (for \(P,Q\)) in the sense of \cite[3.6]{pink2011algebraiczipdata}. For \(x\in {}^IW^J\cong P\backslash G/Q\) we set 
    \[P_1^x \coloneqq M\cap {}^{\dot x\inv \dot y\inv}P \quad\text{and}\quad Q_1^x \coloneqq \phi(L\cap {}^{\dot y \dot x}Q)\]
    and consider the corresponding zip datum 
    \begin{align*}
        \tilde E_1^x=\set{(p,q)\in P_1^x\times Q_1^x}{\phi({}^{\dot y\dot x}\bar p) = \bar q} &\to M\eqqcolon \tilde G_1^x\\
        \tau: (p,q) &\mapsto p\\
        \sigma: (p,q) &\mapsto q.
    \end{align*}
    We have \(I_1^x=J\cap {}^{x\inv}I\) and \(J_1^x=\phi({}^yI\cap {}^{yx}J)\), the types of \(P_1^x\) and \(Q_1^x\), see \cite[4.13]{pink2011algebraiczipdata}.
    
    The description of the orbits of the equivalence relation through double quotients works just like in theorem \ref{thm_forest_double_quotient}. So the 0th generation is \({}^IW^J\cong P\backslash G/Q\), the children of the node \(x\in {}^IW^J\) are \({}^{I_1^x}W_J{}^{J_1^x}\) and so on. 
    
    In \cite{pink2011algebraiczipdata} the description of the topological space was obtained by an inductive argument, and not by looking at the entire forest of double quotients. The induction is taken over the number of steps until the refinement process becomes stationary, and in the induction step the combinatoric formula result was used, that \({}^IW\) is the set of all \(xw\) with \(x\in {}^IW^J\) and \(w\in {}^{I_x}W_J\).
\end{example}

We end the discussion of zip data with two results on the structure of the groups \(E_\infty\) and \(G_\infty\).
\begin{lemma}
\label{description_E_infty_via_G_infty}
    Given a zip datum \(\Zcal = (E,G,\tau,\sigma)\), then we have
    \[E_\infty = \set{e\in E}{\sigma(e)\in G_\infty\tau(e) G_\infty}.\]
\end{lemma}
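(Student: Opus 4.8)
The plan is to prove the two inclusions separately, handling "\(\subseteq\)" by a one-line computation and "\(\supseteq\)" element-by-element, using the maximality in the definition of \(E_\infty\) rather than trying to understand the right-hand side as a subgroup.

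For the inclusion \(E_\infty \subseteq \set{e\in E}{\sigma(e)\in G_\infty\tau(e)G_\infty}\) I would simply note that for \(e\in E_\infty\) we have \(\tau(e)\in\tau(E_\infty)=G_\infty\), so that \(G_\infty\tau(e)G_\infty = G_\infty\) because \(G_\infty\) is a subgroup containing \(\tau(e)\); moreover \(\sigma(e)\in\sigma(E_\infty)\subseteq\tau(E_\infty)=G_\infty\). Hence \(\sigma(e)\in G_\infty = G_\infty\tau(e)G_\infty\), and the inclusion follows with no further work.

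The reverse inclusion carries the real content, and the main obstacle is that the set \(S \coloneqq \set{e\in E}{\sigma(e)\in G_\infty\tau(e)G_\infty}\) is not visibly a subgroup: multiplying two of these conditions leaves an extra copy of \(G_\infty\) wedged between \(\tau(e_1)\) and \(\tau(e_2)\), so closure under the group law is not immediate. To sidestep this I would argue one element at a time. Given \(e\in S\), consider the subgroup \(F\coloneqq\langle E_\infty, e\rangle\) of \(E\) generated by \(E_\infty\) together with \(e\). Since images of generated subgroups under a homomorphism are generated by the images, I obtain \(\tau(F)=\langle G_\infty,\tau(e)\rangle\) and \(\sigma(F)=\langle\sigma(E_\infty),\sigma(e)\rangle\). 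The key observation is that \(\sigma(E_\infty)\subseteq G_\infty\subseteq\tau(F)\) and, by the defining condition on \(e\), that \(\sigma(e)\in G_\infty\tau(e)G_\infty\subseteq\langle G_\infty,\tau(e)\rangle=\tau(F)\); together these give \(\sigma(F)\subseteq\tau(F)\).

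Finally I would invoke the maximality of \(E_\infty\): since \(E_\infty\) is the largest subgroup of \(E\) with \(\sigma(E_\infty)\subseteq\tau(E_\infty)\) and \(F\) satisfies the same relation, we conclude \(F\subseteq E_\infty\), whence \(e\in E_\infty\). (If one wishes to be careful about the existence of a largest such subgroup, it is worth remarking that the property \(\sigma(F)\subseteq\tau(F)\) is preserved under forming the join of subgroups, so that the family of subgroups with this property is closed under arbitrary joins and therefore possesses a largest element; this also legitimizes the phrase "largest subgroup" in the definition.) Combining the two inclusions yields the claimed equality. I expect the only delicate point to be the inclusion \(G_\infty\tau(e)G_\infty\subseteq\langle G_\infty,\tau(e)\rangle\), which is exactly what makes the generated-subgroup trick succeed and replaces the failed attempt to verify closure of \(S\) directly.
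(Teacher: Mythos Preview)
Your proof is correct and follows essentially the same approach as the paper: both handle ``\(\subseteq\)'' by observing that \(\tau(e),\sigma(e)\in G_\infty\) for \(e\in E_\infty\), and both prove ``\(\supseteq\)'' by taking the subgroup \(F=\langle E_\infty, e\rangle\), checking \(\sigma(F)\subseteq\tau(F)\) via the two generators, and invoking the maximality of \(E_\infty\). Your additional remark on why a largest such subgroup exists (closure under joins) is a welcome clarification that the paper leaves implicit.
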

\begin{proof}
    If \(e\in E_\infty\), then \(\sigma(e)\in \tau(E_\infty)=G_\infty\), and hence 
    \[\sigma(e) = \underbrace{\tau(e\inv)}_{\in G_\infty} \tau(e) \underbrace{\sigma(e)}_{\in G_\infty}.\]
    For "\(\supseteq\)" suppose that \(e\in E\) with \(\sigma(e)\in G_\infty\tau(e) G_\infty\). Now look at the group \(F\) generated by \(e\) and \(E_\infty\). We want to show that it satisfies \(\sigma(F)\subseteq\tau(F)\), so that \(e\in E_\infty\). For this it satisfies to show that \(\sigma(e)\in\tau(F)\), as we already have \(\sigma(E_\infty)\subseteq\tau(E_\infty)\) and all elements of \(F\) can be built by \(e\), \(e\inv\) and \(E_\infty\). But 
    \[\sigma(e) \in \underbrace{G_\infty}_{=\tau(E_\infty)\subseteq\tau(F)}\quad \underbrace{\tau(e)}_{\in\tau(F)}\quad \underbrace{G_\infty}_{=\tau(E_\infty)\subseteq\tau(F)}\subseteq \tau(F). \qedhere\]
\end{proof}

\begin{proposition}
    The group action 
    \begin{align*}
        E_\infty^x\times (E\times G_\infty^x) &\to E\times G_\infty^x \\
        \big(\varepsilon, (e,g)\big) &\mapsto \varepsilon.(e,g)\coloneqq\big(e\varepsilon\inv, \tau(\varepsilon)g\;{}^x\sigma(\varepsilon)\inv\big)
    \shortintertext{makes the map}
        E\times G_\infty^x &\twoheadrightarrow o_{\mathcal Z}(x)\\
        (e,g) &\mapsto \tau(e)gx\sigma(e)\inv
    \end{align*}
    into an \(E_\infty^x\)-torsor.
\end{proposition}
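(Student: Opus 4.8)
The plan is to verify directly the defining properties of a torsor for the action map. Writing $f\colon E\times G_\infty^x\to o_{\mathcal Z}(x)$, $(e,g)\mapsto\tau(e)gx\sigma(e)\inv$, I must show that the $E_\infty^x$-action is well defined on $E\times G_\infty^x$, that $f$ is surjective and $E_\infty^x$-invariant, and that $E_\infty^x$ acts simply transitively on each (nonempty) fibre of $f$. The last two points are exactly the statement that the shearing map $E_\infty^x\times(E\times G_\infty^x)\to(E\times G_\infty^x)\times_{o_{\mathcal Z}(x)}(E\times G_\infty^x)$, $(\varepsilon,(e,g))\mapsto(\varepsilon.(e,g),(e,g))$, is a bijection, which is the torsor condition.

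First I would dispose of the routine points. The action lands in $E\times G_\infty^x$ because $\tau(\varepsilon)\in\tau(E_\infty^x)=G_\infty^x$ and, by the very definition of $E_\infty^x=E_\infty(\Zcal^x)$ as the largest subgroup with ${}^x\sigma(E_\infty^x)\subseteq\tau(E_\infty^x)$, also ${}^x\sigma(\varepsilon)\inv\in G_\infty^x$, so the second coordinate is a product of three elements of $G_\infty^x$. It is a genuine left action since $\tau$ and ${}^x\sigma$ are homomorphisms, so the two coordinates compose to $e(\varepsilon_1\varepsilon_2)\inv$ and $\tau(\varepsilon_1\varepsilon_2)\,g\,{}^x\sigma(\varepsilon_1\varepsilon_2)\inv$. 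Surjectivity of $f$ onto $o_{\mathcal Z}(x)$ is just the definition of the equivalence class. Invariance $f(\varepsilon.(e,g))=f(e,g)$ is a short computation: after cancelling $\tau(e)\tau(\varepsilon)\inv\tau(\varepsilon)=\tau(e)$ and $\sigma(e\varepsilon\inv)\inv=\sigma(\varepsilon)\sigma(e)\inv$ one is left with $\tau(e)\,g\,{}^x\sigma(\varepsilon)\inv x\,\sigma(\varepsilon)\sigma(e)\inv$, and the identity ${}^x\sigma(\varepsilon)\inv x=x\sigma(\varepsilon)\inv$ collapses this to $\tau(e)gx\sigma(e)\inv=f(e,g)$. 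Finally, freeness is immediate from the first coordinate, since $e\varepsilon\inv=e$ forces $\varepsilon=1$.

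The real content is transitivity on fibres. Given $(e,g),(e',g')$ with $f(e,g)=f(e',g')$, the first coordinate forces the only possible connecting element to be $\varepsilon\coloneqq(e')\inv e\in E$. I would then check that the second coordinate matches automatically: solving $\tau(e)gx\sigma(e)\inv=\tau(e')g'x\sigma(e')\inv$ for $g'$ and using $x\sigma(e)\inv\sigma(e')x\inv={}^x\sigma(e)\inv{}^x\sigma(e')$ gives $g'=\tau(\varepsilon)\,g\,{}^x\sigma(\varepsilon)\inv$, which is precisely the second coordinate of $\varepsilon.(e,g)$.

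The main obstacle is that this $\varepsilon$ need not a priori lie in $E_\infty^x$, and without that the very expression $\varepsilon.(e,g)$ is undefined. This is where lemma \ref{description_E_infty_via_G_infty}, applied to the conjugated datum $\Zcal^x$, is decisive: it identifies $E_\infty^x$ with $\set{\varepsilon\in E}{{}^x\sigma(\varepsilon)\in G_\infty^x\tau(\varepsilon)G_\infty^x}$. Rearranging the relation $g'=\tau(\varepsilon)\,g\,{}^x\sigma(\varepsilon)\inv$ just obtained yields ${}^x\sigma(\varepsilon)=(g')\inv\tau(\varepsilon)g$ with $g,g'\in G_\infty^x$, hence ${}^x\sigma(\varepsilon)\in G_\infty^x\tau(\varepsilon)G_\infty^x$, so $\varepsilon\in E_\infty^x$ by the lemma. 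Once $\varepsilon\in E_\infty^x$ is secured, $\varepsilon.(e,g)=(e',g')$ follows from the two coordinate computations above, and uniqueness of $\varepsilon$ is the freeness already shown; this completes the verification that $f$ exhibits $E\times G_\infty^x$ as an $E_\infty^x$-torsor over $o_{\mathcal Z}(x)$.
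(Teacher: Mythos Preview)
Your proof is correct and follows the same approach as the paper: you force $\varepsilon=(e')^{-1}e$ from the first coordinate, verify the second-coordinate identity $g'=\tau(\varepsilon)g\,{}^x\sigma(\varepsilon)^{-1}$, and then invoke Lemma~\ref{description_E_infty_via_G_infty} for $\Zcal^x$ to conclude $\varepsilon\in E_\infty^x$ from ${}^x\sigma(\varepsilon)=(g')^{-1}\tau(\varepsilon)g\in G_\infty^x\tau(\varepsilon)G_\infty^x$. Your write-up is slightly more explicit about the routine verifications (well-definedness of the action, invariance, freeness), but the substantive step is identical.
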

\begin{proof}
    We need to show that \((e,g)\) and \(\varepsilon.(e,g)\) have the same image in \(o_{\mathcal Z}(x)\) and that if \((e,g)\) and \((\tilde e, \tilde g)\) have the same image in \(o_{\mathcal Z}(x)\), then there is a unique \(\varepsilon\in E_\infty^x\) with
    \[(\tilde e, \tilde g) = \varepsilon.(e,g) = \big(e\varepsilon\inv, \tau(\varepsilon)g\;{}^x\sigma(\varepsilon)\inv\big).\]
    The first is just a quick computation, and for the second claim we are forced to define \(\varepsilon=\tilde e\inv e\). Then the equation we need to show is true because by assumption we have 
    \[\tau(\tilde e)\tilde gx\sigma(\tilde e)\inv = \tau(e)gx\sigma(e)\inv\]
    and hence 
    \[\tilde g = \tau(\underbrace{\tilde e\inv e}_{=\varepsilon}) gx\sigma(\underbrace{\tilde e\inv e}_{=\varepsilon})\inv x\inv.\]
    All that's left is to verify that we have \(\varepsilon\in E_\infty^x\). But for this we use that we have 
    \[E_\infty^x \overset{\ref{description_E_infty_via_G_infty}}{=} \set{e\in E}{{}^x\sigma(e)\in G_\infty^x\tau(e) G_\infty^x}\]
    after observing \({}^x\sigma(\varepsilon)=\tilde g\inv\tau(\varepsilon)g\). 
\end{proof}


\newpage
\printbibliography
\addcontentsline{toc}{section}{References}


\end{document}